\documentclass{amsart}
\usepackage{amsmath,amsthm,amssymb, cancel}

\theoremstyle{plain}
\newtheorem{theorem}{Theorem}[section]
\newtheorem{lemma}[theorem]{Lemma}
\newtheorem{corollary}[theorem]{Corollary}

\theoremstyle{definition}
\newtheorem{definition}[theorem]{Definition}
\newtheorem{example}[theorem]{Example}

\theoremstyle{remark}

\title{A short nonstandard proof of \\ the Radon-Nikodym theorem}

\author{Takashi Matsunaga}
\address{Department of Medical Informatics, Osaka International Cancer Institute, Osaka City, Japan}
\email{matsunaga-ta@nifty.com}

\begin{document}

\renewcommand{\thefootnote}{\fnsymbol{footnote}}
\footnotetext[0]{2020 Mathematics Subject Classification. Primary nonstandard analysis 26E35; Secondary   
integration with respect to measures and other set functions 28A25.}

\begin{abstract} 
Using nonstandard analysis, an intuitive and very short proof of the Radon-Nikodym theorem is provided.
\end{abstract}

\maketitle

\section{Introduction}

Although the Radon-Nikodym theorem is one of the most fundamental theorems in  measure theory, proofs in standard mathematics are long and not straightforward ({\it e.g.} Bruckner \cite{Bruckner}).  

On the other hand, Luxemburg \cite{Luxemburg} offered a nonstandard proof of the Radon-Nikodym theorem using a result of Riesz, while Ross \cite{Ross} gave another nonstandard proof of the theorem through a conditional expectation argument. Recently, Ma \cite{Ma} presented a Loeb space version of the theorem.     

Here we provide an intuitive and very short proof of the Radon-Nikodym theorem using nonstandard analysis.

\section{Preliminaries}
We assume that the reader is familiar with basic nonstandard analysis. A self-contained quick introduction to nonstandard analysis is presented in the Appendix.  Davis \cite{Davis} is a standard textbook for nonstandard analysis. We write $x \simeq y$, if $x$ is infinitesimaly close to $y$.

Throughout this note, we fix a (standard) finite measure space $(M, \mathcal{M}, \nu)$. Let  $\mu$ be any (standard) finite signed measure on $(M, \mathcal{M})$.

\begin{lemma}
If there exists a measurable $P_0 \in \mathcal{M}$ with $\mu(P_0)>0$ , then there exists a measurable $P \in \mathcal{M}$ with $\mu(P)>0$ such that for any $Q \in \mathcal{M}$  and $Q \subseteq P$, $ \mu(Q) \geq 0 $.
\end{lemma}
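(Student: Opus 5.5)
This is the classical positive-set lemma (the first step of the Hahn decomposition). We prove it by a countable exhaustion argument inside $P_0$. Since the paper is nonstandard in spirit, we also note a variant at the end.

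\emph{Step 1 (first removal).} If $P_0$ itself is positive, that is, $\mu(Q)\ge 0$ for all measurable $Q\subseteq P_0$, take $P=P_0$. Otherwise let $n_1$ be the smallest positive integer for which there is a measurable $E_1\subseteq P_0$ with $\mu(E_1)<-1/n_1$, and fix such an $E_1$.

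\emph{Step 2 (induction).} Having chosen disjoint $E_1,\dots,E_{k-1}$, set $A_{k-1}=P_0\setminus(E_1\cup\dots\cup E_{k-1})$. If $A_{k-1}$ is positive, stop and take $P=A_{k-1}$. Otherwise let $n_k$ be the smallest positive integer such that some measurable $E_k\subseteq A_{k-1}$ has $\mu(E_k)<-1/n_k$, and fix such an $E_k$.

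\emph{Step 3 (the infinite case).} If the process never stops, put $E=\bigcup_k E_k$ and $P=P_0\setminus E$. By countable additivity,
\[
\mu(E)=\sum_k\mu(E_k)\le -\sum_k 1/n_k .
\]
Since $\mu$ is a finite signed measure, $\mu(E)$ is a real number, so $\sum_k 1/n_k<\infty$ and hence $n_k\to\infty$. Then
\[
\mu(P)=\mu(P_0)-\mu(E)\ge \mu(P_0)>0 .
\]
Now suppose some measurable $Q\subseteq P$ had $\mu(Q)<0$. Choose a standard integer $m$ with $\mu(Q)<-1/m$. Because $Q\subseteq A_{k-1}$ for every $k$, minimality of $n_k$ forces $n_k\le m$ for all $k$. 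This contradicts $n_k\to\infty$. So $P$ is positive.

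In the finite-stopping case of Step 2, the same computation gives $\mu(A_{k-1})=\mu(P_0)-\sum_{j<k}\mu(E_j)>\mu(P_0)>0$, so that $P$ also has $\mu(P)>0$.

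\emph{Main obstacle.} The delicate point is the choice of the minimal $n_k$, since it is what drives the final contradiction. The key fact is that $\mu(E)$ is finite: this is exactly where finiteness of $\mu$ is used, via the convergence of $\sum_k\mu(E_k)$.

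A nonstandard variant would instead take an infinite hyperfinite stage $K$ and set $P={}^*P_0\setminus\bigcup_{k\le K}E_k$ in the extended sequence. That route still requires transferring the standard claim back, so the standard exhaustion argument above is the cleaner choice, and it is the one we would write up.
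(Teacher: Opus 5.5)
Your proof is correct and takes essentially the same approach as the paper: the classical exhaustion of $P_0$ by disjoint sets of negative measure, each chosen with the least admissible $n_k$, followed by taking $P=P_0\setminus\bigcup_k E_k$. You also fill in what the paper's sketch leaves implicit: the finite-stopping case, the fact that $n_k\to\infty$ because $\mu$ is finite, and the verification that $\mu(P)>0$ and that $P$ is positive.
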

\begin{proof}
Suppose that there exists a nonempty $P_1 \in \mathcal{M}$ with $\mu(P_1) < 0$ and $P_1 \subseteq P_0$. Define for each $n \in \mathbb{N^+}$, $P_{n+1} \in \mathcal{M}$ inductively by
$$ P_{n+1}  \subseteq P_0 - \bigcup_{k =1}^n P_k \ {\rm and} \ \mu(P_{n+1}) \le -\frac{1}{n_0} \ ({\rm with \ the \ least \ natural \ number} \ n_0).$$
$P = P_0- \bigcup_{k=1}^\infty P_k$ has the desired property.
\end{proof}

\begin{corollary}{\rm (the Hahn decomposition)}{\label{Hahn}}
There exists a measurable $M^+ \in \mathcal{M}$ such that  $\mu(M^+) = \max_{S \in \mathcal{M}}{\mu(S)}$,  and for any $T \in \mathcal{M}$ with $\mu(T) = \mu (M^+)$, $ \nu(M^+) \geq \nu(T)$.
\end{corollary}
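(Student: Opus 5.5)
We must show two things. First, $\sup_{S\in\mathcal M}\mu(S)$ is attained. Second, among the maximizers there is one of largest $\nu$-measure. The plan is a standard compactness-free argument using countable unions, with the Lemma doing the local work. Nonstandard tools are not needed here.

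\textbf{Step 1: the supremum is attained.} Let $s=\sup_{S\in\mathcal M}\mu(S)$, which is finite since $\mu$ is a finite signed measure, and $s\ge\mu(\emptyset)=0$. Choose $S_n$ with $\mu(S_n)\to s$. Let $N$ be a set realizing $\inf_S\mu(S)$ approximately, or more simply use a Hahn-type argument built on the Lemma, as follows.

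Consider the class $\mathcal P$ of \emph{positive} sets, meaning sets $P$ with $\mu(Q)\ge0$ for all measurable $Q\subseteq P$. The class $\mathcal P$ is closed under countable unions: a subset of $\bigcup P_k$ splits disjointly into pieces lying in the successive differences $P_k\setminus\bigcup_{j<k}P_j$. Let $\alpha=\sup_{P\in\mathcal P}\mu(P)$ and pick $P_k\in\mathcal P$ with $\mu(P_k)\to\alpha$. Then $M^+=\bigcup_k P_k\in\mathcal P$ satisfies $\mu(M^+)=\alpha$, because positivity gives monotonicity within positive sets.

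The complement $M\setminus M^+$ contains no set of positive measure. Indeed, if $\mu(P_0)>0$ with $P_0\subseteq M\setminus M^+$, the Lemma gives a positive $P\subseteq P_0$ with $\mu(P)>0$. Then $M^+\cup P\in\mathcal P$ has measure $\alpha+\mu(P)>\alpha$, a contradiction. Hence for any $S$,
\[
\mu(S)=\mu(S\cap M^+)+\mu(S\setminus M^+)\le\mu(S\cap M^+)\le\mu(M^+),
\]
so $\mu(M^+)=\max_S\mu(S)$.

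\textbf{Step 2: maximize $\nu$ among maximizers.} Let $\mathcal T=\{T\in\mathcal M:\mu(T)=s\}$, which is nonempty by Step 1. The key claim is that $\mathcal T$ is closed under finite and countable unions. For $T_1,T_2\in\mathcal T$,
\[
\mu(T_1\cup T_2)=\mu(T_1)+\mu(T_2)-\mu(T_1\cap T_2)=2s-\mu(T_1\cap T_2)\ge s,
\]
since $\mu(T_1\cap T_2)\le s$; hence $\mu(T_1\cup T_2)=s$. For countable unions, the increasing finite unions all have measure $s$, and continuity from below of the finite signed measure gives measure $s$ for the limit.

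Now let $\beta=\sup_{T\in\mathcal T}\nu(T)\le\nu(M)<\infty$ and pick $T_k\in\mathcal T$ with $\nu(T_k)\to\beta$. Redefine $M^+=\bigcup_kT_k$. Then $M^+\in\mathcal T$, and $\nu(M^+)\ge\nu(T_k)$ for all $k$, so $\nu(M^+)=\beta\ge\nu(T)$ for every $T\in\mathcal T$, as required.

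\textbf{Main obstacle.} The delicate point is Step 1: showing the supremum is attained. It relies on the Lemma, which lets any positive-measure set be shrunk to a positive set. It also relies on closure of positive sets under countable unions, which needs the disjointification argument. Step 2 is then routine via the union identity above.
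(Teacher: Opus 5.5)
Your proof is correct and follows the route the paper intends: the paper states this corollary without proof right after the positive-set Lemma, and your argument fills it in the standard way. You take the exhaustion over countable unions of positive sets, show that no subset of $M\setminus M^+$ has positive measure, and then use closure of the $\mu$-maximizers under countable unions to get a $\nu$-maximal one; this last part covers the $\nu$-clause, which the paper never addresses. The only thing you use beyond the Lemma's literal statement is $P\subseteq P_0$, which the Lemma's proof supplies; alternatively, apply the Lemma to $\mu(\cdot\cap(M\setminus M^+))$.
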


\begin{lemma}\label{concurrence}
There exists a *-finite algebra $\mathcal{M}_0 \subseteq \mathcal{^*M}$ such that for any $S \in \mathcal{M}$, ${^*S} \in \mathcal{M}_0$. 
\end{lemma}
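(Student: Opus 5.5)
The plan is to use saturation, or equivalently the concurrence (enlargement) principle, in the standard way one shows that every standard set is contained in a $*$-finite set. We then close the resulting $*$-finite family of internal sets under the Boolean operations.

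\textbf{Step 1.} Consider the binary relation $R$ on $\mathcal{P}(\mathcal{M}) \times \mathcal{M}$, the collection of subfamilies of $\mathcal{M}$ paired with members of $\mathcal{M}$, defined by
$$R(\mathcal{F}, S) \iff \mathcal{F} \text{ is a finite subalgebra of } \mathcal{M} \text{ and } S \in \mathcal{F}.$$
This relation is concurrent (finitely satisfiable). Given finitely many $S_1, \dots, S_n \in \mathcal{M}$, the algebra $\mathcal{F}$ generated by $S_1, \dots, S_n$ is finite, since it has at most $2^{2^n}$ elements; it is contained in $\mathcal{M}$ because $\mathcal{M}$ is a $\sigma$-algebra; and it satisfies $R(\mathcal{F}, S_i)$ for every $i$.

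\textbf{Step 2.} By the concurrence principle (available from the enlargement/saturation property assumed in the Appendix), there is an element $\mathcal{M}_0$ in the domain of ${}^*R$ with ${}^*R(\mathcal{M}_0, {}^*S)$ for every standard $S \in \mathcal{M}$. Transferring the defining formula of $R$, this says three things: $\mathcal{M}_0$ is an internal $*$-finite family of elements of ${}^*\mathcal{M}$, it is closed under complement relative to ${}^*M$ and under (internal, hence also finite) unions, and ${}^*S \in \mathcal{M}_0$ for each $S \in \mathcal{M}$. Closure under union and complement is what "algebra" means, so $\mathcal{M}_0 \subseteq {}^*\mathcal{M}$ is a $*$-finite algebra with the required property.

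\textbf{Main obstacle.} The only delicate point is making sure the model is rich enough: concurrence must hold for relations on a set of the size of $\mathcal{M}$. With an enlargement, or $\kappa$-saturation for $\kappa > |\mathcal{M}|$, this is exactly the statement of the principle.

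If the Appendix provides only countable saturation, I would instead use the following formulation. The internal sets $A_S = \{\mathcal{F} \in {}^*\mathcal{P}_{\mathrm{fin}}(\mathcal{M}) : \mathcal{F} \text{ is a }*\text{-algebra},\ {}^*S \in \mathcal{F}\}$ have the finite intersection property, and I would invoke $|\mathcal{M}|^+$-saturation to obtain an element of $\bigcap_{S \in \mathcal{M}} A_S$. I would note that this level of saturation is the standing assumption of the paper's framework. The remaining verifications (finite generation, transfer of "algebra") are routine.
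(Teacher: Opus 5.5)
Your proposal is correct and is essentially the paper's argument: the paper's proof is just ``Apply the Concurrence Principle,'' and you make explicit the concurrent relation (finite subalgebra of $\mathcal{M}$ containing $S$) and why it is concurrent (the algebra generated by $S_1,\dots,S_n$ has at most $2^{2^n}$ elements). Your worry about saturation is unnecessary, though, and the claim that $|\mathcal{M}|^+$-saturation is the paper's standing assumption is inaccurate: the Appendix's Concurrence Principle holds for arbitrary $X \subseteq U$ (the ultrapower is an enlargement), so Step 2 applies directly with $X = \mathcal{M}$.
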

\begin{proof}
Apply the Concurrence Principle.
\end{proof}

\begin{corollary}
There exsists a measurable $M_0^+ \in \mathcal{M}_0$ such that  ${^*\mu}(M_0^+) = \max_{S \in \mathcal{M}_0}{^*\mu}(S)$,  and for any $T \in \mathcal{M}_0$ with ${^*\mu}(T) = {^*\mu}(M_0^+)$, ${^*\nu}(M_0^+) \geq {^*\nu(T)}$.
\end{corollary}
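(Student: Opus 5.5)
The plan is to transfer a finite, purely combinatorial fact. In a finite algebra of sets, every maximum and every tie-break by $\nu$ can be realized by finite search. So I first state and prove the standard finite version, and then apply transfer to the $*$-finite algebra $\mathcal{M}_0$ from Lemma \ref{concurrence}.

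The finite statement I will use is this: \emph{for every finite family $\mathcal{F}\subseteq\mathcal{M}$, there exists $S_0\in\mathcal{F}$ such that $\mu(S_0)=\max_{S\in\mathcal{F}}\mu(S)$, and $\nu(S_0)\geq\nu(T)$ for every $T\in\mathcal{F}$ with $\mu(T)=\mu(S_0)$.} The proof is elementary, assuming $\mathcal{F}$ is nonempty (this holds for an algebra, which contains $\emptyset$):
\begin{itemize}
\item The set of values $\{\mu(S): S\in\mathcal{F}\}$ is a finite nonempty set of reals, so it has a maximum $m$.
\item The subfamily $\mathcal{F}_m=\{T\in\mathcal{F}:\mu(T)=m\}$ is finite and nonempty.
\item The values $\{\nu(T):T\in\mathcal{F}_m\}$ therefore also have a maximum. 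Any $S_0\in\mathcal{F}_m$ attaining it works.
\end{itemize}
This is a first-order statement over the standard universe, quantifying over finite subsets of $\mathcal{M}$, with $\mu$ and $\nu$ as parameters.

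Next I apply the Transfer Principle. It yields: \emph{for every $*$-finite family $\mathcal{F}\subseteq{^*\mathcal{M}}$, there exists $S_0\in\mathcal{F}$ with ${^*\mu}(S_0)=\max_{S\in\mathcal{F}}{^*\mu}(S)$, and ${^*\nu}(S_0)\geq{^*\nu}(T)$ whenever $T\in\mathcal{F}$ and ${^*\mu}(T)={^*\mu}(S_0)$.} By Lemma \ref{concurrence}, $\mathcal{M}_0$ is a $*$-finite (internal) subset of ${^*\mathcal{M}}$, and it is nonempty since it contains ${^*\emptyset}$. Taking $\mathcal{F}=\mathcal{M}_0$ and $M_0^+=S_0$ gives the corollary. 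The "max" in the statement is understood as the internal maximum over a $*$-finite set, which is exactly what transfer provides.

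The only point needing care is that $\mathcal{M}_0$ must be internal, and "$*$-finite" means precisely an internal set that is hyperfinite. I take this to be part of the content of Lemma \ref{concurrence}. The Concurrence Principle produces an element of ${^*\mathcal{P}_{\rm fin}(\mathcal{M})}$, possibly closed under the algebra operations by transfer of "every finite family generates a finite algebra". So the transferred quantifier over ${^*}$(finite subsets of $\mathcal{M}$) legitimately ranges over $\mathcal{M}_0$. Beyond this, no genuine obstacle is expected.
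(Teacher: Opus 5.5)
Your proof is correct and is essentially the argument the paper leaves implicit, since the corollary is stated without proof right after Lemma \ref{concurrence}: you transfer the elementary finite-search fact about finite families in $\mathcal{M}$ and apply it to the nonempty $*$-finite family $\mathcal{M}_0 \in {^*\mathcal{P}_F(\mathcal{M})}$. Only the $*$-finiteness and nonemptiness of $\mathcal{M}_0$ are actually used (nonemptiness already follows from ${^*\emptyset}\in\mathcal{M}_0$), not its algebra structure.
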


\begin{corollary}\label{key}
$^*\mu(M_0^+ \ominus {^*(M^+)}) = 0, \  ^*\nu(M_0^+ \ominus {^*(M^+)}) = 0$. 
 \end{corollary}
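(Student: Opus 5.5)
The plan is to compare the two maximizers $M_0^+$ and ${^*(M^+)}$ directly. I will use transfer of Corollary~\ref{Hahn} on one side and the preceding corollary (maximization over the $*$-finite algebra $\mathcal{M}_0$) on the other. Throughout, $\ominus$ denotes symmetric difference. Write
$$A = M_0^+ - {^*(M^+)}, \qquad B = {^*(M^+)} - M_0^+ .$$
It suffices to show ${^*\mu}(A) = {^*\mu}(B) = 0$ and ${^*\nu}(A) = {^*\nu}(B) = 0$, since these sets are disjoint and ${^*\mu}$, ${^*\nu}$ are internally (finitely) additive.

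\textbf{Step 1: equal maximal values.} By transfer of Corollary~\ref{Hahn}, ${^*(M^+)}$ maximizes ${^*\mu}$ over all of ${^*\mathcal{M}}$. It also has the largest ${^*\nu}$ among all $T \in {^*\mathcal{M}}$ with ${^*\mu}(T) = {^*\mu}({^*(M^+)})$. By Lemma~\ref{concurrence}, ${^*(M^+)} \in \mathcal{M}_0 \subseteq {^*\mathcal{M}}$. Hence
\begin{itemize}
\item ${^*\mu}(M_0^+) \le {^*\mu}({^*(M^+)})$, because $M_0^+ \in {^*\mathcal{M}}$;
\item ${^*\mu}({^*(M^+)}) \le {^*\mu}(M_0^+)$, because $M_0^+$ is maximal over $\mathcal{M}_0$.
\end{itemize}
So both values equal a common number $m$. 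Applying the two tie-break properties in the same way gives ${^*\nu}(M_0^+) = {^*\nu}({^*(M^+)})$, though this equality is not strictly needed below.

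\textbf{Step 2: the $\mu$-part.} Here I use that both sets are maximal, together with closure of $\mathcal{M}_0$ under Boolean operations (it is an internal algebra).
\begin{itemize}
\item Since $A$ is disjoint from ${^*(M^+)}$ and ${^*(M^+)} \cup A \in {^*\mathcal{M}}$, we get $m + {^*\mu}(A) \le m$, so ${^*\mu}(A) \le 0$.
\item Since $M_0^+ - A = M_0^+ \cap {^*(M^+)} \in \mathcal{M}_0$, we get $m - {^*\mu}(A) \le m$, so ${^*\mu}(A) \ge 0$.
\item Symmetrically, ${^*(M^+)} - B \in {^*\mathcal{M}}$ gives ${^*\mu}(B) \ge 0$.
\item Finally, $M_0^+ \cup B \in \mathcal{M}_0$ gives ${^*\mu}(B) \le 0$.
\end{itemize}
Thus ${^*\mu}(A) = {^*\mu}(B) = 0$, and so ${^*\mu}(M_0^+ \ominus {^*(M^+)}) = 0$.

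\textbf{Step 3: the $\nu$-part.} This uses the tie-breaking clauses. By Step 2, ${^*\mu}({^*(M^+)} \cup A) = m$. The transferred $\nu$-maximality of ${^*(M^+)}$ then gives ${^*\nu}({^*(M^+)}) \ge {^*\nu}({^*(M^+)}) + {^*\nu}(A)$, so ${^*\nu}(A) \le 0$. Since ${^*\nu} \ge 0$, this forces ${^*\nu}(A) = 0$. Likewise $M_0^+ \cup B \in \mathcal{M}_0$ has ${^*\mu}$-value $m$, and the tie-break in the corollary for $M_0^+$ yields ${^*\nu}(B) = 0$.

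The only delicate point is to check, at each step, which family a competing set belongs to. Sets built from $M_0^+$ and ${^*(M^+)}$ must lie in $\mathcal{M}_0$ to compete with $M_0^+$; this holds because $\mathcal{M}_0$ is an algebra containing ${^*(M^+)}$ by Lemma~\ref{concurrence}. Sets competing with ${^*(M^+)}$ need only lie in ${^*\mathcal{M}}$, which is automatic.
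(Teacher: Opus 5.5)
Your proof is correct and follows the route the paper leaves implicit, since it states this corollary without proof. You identify the common maximal value via the transferred Corollary~\ref{Hahn} and ${^*(M^+)}\in\mathcal{M}_0$, then test ${^*(M^+)}\cup A$, $M_0^+\cup B$ and $M_0^+\cap{^*(M^+)}$ against the two maximality and tie-break clauses, which yields both ${^*\mu}$- and ${^*\nu}$-nullity of the symmetric difference.
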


\begin{lemma}\label{absolute}
If $\mu$ is non-negative and absolute continuous with respect to $\nu$, then for any $E \in \mathcal{^*M}$ with $^*\nu(E) \simeq 0$, $^*\mu(E) \simeq 0$.    
\end{lemma}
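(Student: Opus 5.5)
The plan is to use the standard $\varepsilon$--$\delta$ characterization of absolute continuity for finite measures, which holds because $\mu$ is finite and non-negative, and then transfer it to the nonstandard universe. First we establish the standard fact:
\begin{quote}
for every standard $\varepsilon>0$ there exists a standard $\delta>0$ such that for all $E\in\mathcal{M}$ with $\nu(E)<\delta$ we have $\mu(E)<\varepsilon$.
\end{quote}

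We prove this fact by contradiction. Suppose it fails for some $\varepsilon>0$. Then for each $n\in\mathbb{N}^+$ choose $E_n\in\mathcal{M}$ with $\nu(E_n)<2^{-n}$ and $\mu(E_n)\ge\varepsilon$. Put $F=\limsup_n E_n=\bigcap_{m}\bigcup_{n\ge m}E_n$. On one hand, $\nu(F)\le\sum_{n\ge m}2^{-n}=2^{1-m}$ for every $m$, so $\nu(F)=0$. On the other hand, $\mu(\bigcup_{n\ge m}E_n)\ge\varepsilon$ for every $m$. These sets decrease in $m$ and $\mu$ is finite, so continuity from above gives $\mu(F)\ge\varepsilon>0$. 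This contradicts $\mu\ll\nu$. The use of finiteness of $\mu$ here is the only delicate point.

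Next comes the nonstandard step. Fix a standard $\varepsilon>0$ and take the standard $\delta$ from the fact above. By transfer, the statement ``for all $E\in{^*\mathcal{M}}$, ${^*\nu}(E)<\delta$ implies ${^*\mu}(E)<\varepsilon$'' holds. Now let $E\in{^*\mathcal{M}}$ with ${^*\nu}(E)\simeq 0$. Since $\delta$ is standard and positive, ${^*\nu}(E)<\delta$, and therefore ${^*\mu}(E)<\varepsilon$. Non-negativity of $\mu$ transfers, so $0\le{^*\mu}(E)$. Thus $0\le{^*\mu}(E)<\varepsilon$ for every standard $\varepsilon>0$, which means ${^*\mu}(E)\simeq 0$.

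The main obstacle is the standard $\varepsilon$--$\delta$ lemma, specifically the $\limsup$ argument, which must invoke the finiteness of $\mu$. After that, the transfer step is routine.
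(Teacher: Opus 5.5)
Your proof is correct and is essentially the paper's argument. The paper argues by contradiction: it transfers the existence of an $E\in{}^*\mathcal{M}$ with ${}^*\nu(E)$ small and ${}^*\mu(E)\not\simeq 0$ down to standard sets $F_k$ with $\nu(F_k)<2^{-k}$ and $\mu(F_k)$ bounded below, then derives the same $\limsup$/Borel--Cantelli contradiction with $\mu\ll\nu$; you instead prove the standard $\varepsilon$--$\delta$ form first and transfer it up, which is the contrapositive of the same step (and incidentally avoids the paper's slightly loose lower bound $\mathrm{st}({}^*\mu(E))$, where transfer really only gives a standard constant strictly below ${}^*\mu(E)$, e.g.\ half the standard part).
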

\begin{proof}
If otherwise, we have $E \in \mathcal{^*M}$ with $^*\nu(E) \simeq 0$ such that $ ^*\mu(E)  \not\simeq 0$. By the Transfer Principle, for each $k \in \mathbb{N^+}$ we obtain $F_k \in \mathcal{M}$ such that $\nu(F_k) <1/2^k$ and $\mu(F_k) \ge {\rm st}(\, ^*\mu(E)) >0$, where ${\rm st}(x)$ is the standart part of $x$. Let $F = \bigcap_{n=1}^{\infty} \bigcup_{k=n}^{\infty} F_k$. Using the Borel-Cantelli lemma, it is a routine to see that $\nu(F) =0$ and $\mu(F) >0$, which cotradicts the absolute continuity.
\end{proof}

\section{A nonstandard proof the Radon-Nikodym theorem}

In this section, we fix a (standard) finite signed measure $\lambda$ on $(M, \mathcal{M})$, which is absolutely continuous with respect  to $\nu$. We can assume that $\lambda$ is non-negative thanks to the Hahn decomposition (Corollary \ref{Hahn}).

\begin{lemma}\label{main1}
There exists a non-negative $f_0: {^*M} \rightarrow { ^* \mathbb R}$ such that for any $S \in \mathcal{M}$,
$$  {^*\lambda}({^*S}) = {\int}_{^*S} f_0(x) d{^*\nu}, \ \ \ \ \  {\int}_{f_0(x) \ge K} f_0(x) d{^*\!\nu} \simeq 0 \ \  (K: {\rm infinite}). $$ 
 \end{lemma}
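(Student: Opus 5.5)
Build $f_0$ as a $*$-finite simple function on the $*$-finite algebra $\mathcal{M}_0$ of Lemma \ref{concurrence}. Since $\mathcal{M}_0$ is a $*$-finite algebra, it is generated by a $*$-finite partition of ${^*M}$ into atoms $A_1,\dots,A_N$ ($N$ hyperfinite).

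Set
$$f_0(x) = \frac{{^*\lambda}(A_i)}{{^*\nu}(A_i)} \quad\text{for } x\in A_i \text{ with } {^*\nu}(A_i)>0,$$
and $f_0(x)=0$ on atoms with ${^*\nu}(A_i)=0$. This $f_0$ is internal, $*$-measurable, a $*$-simple function, and non-negative because $\lambda\ge 0$.

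\textbf{First identity.} Take $S\in\mathcal{M}$. By Lemma \ref{concurrence}, ${^*S}\in\mathcal{M}_0$, so ${^*S}$ is a $*$-finite union of atoms. Hence
$$\int_{^*S} f_0\, d{^*\nu} = \sum_{A_i\subseteq {^*S},\ {^*\nu}(A_i)>0} {^*\lambda}(A_i).$$
This differs from ${^*\lambda}({^*S})$ only by the sum of ${^*\lambda}(A_i)$ over $\nu$-null atoms. That sum is ${^*\lambda}(Z)$, where $Z$ is the union of those atoms, which is internal with ${^*\nu}(Z)=0$. By transfer of absolute continuity ($\nu(Z)=0\Rightarrow\lambda(Z)=0$ holds for all $Z\in\mathcal{M}$), we get ${^*\lambda}(Z)=0$. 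So the identity holds exactly.

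\textbf{Uniform integrability.} Fix $K$ infinite and let $E=\{f_0\ge K\}$. This is a union of atoms, so it lies in $\mathcal{M}_0$, and
$$\int_E f_0\,d{^*\nu}={^*\lambda}(E).$$
On the other hand,
$$K\,{^*\nu}(E)\le \int_E f_0\,d{^*\nu} = {^*\lambda}(E)\le {^*\lambda}({^*M})=\lambda(M),$$
which is finite. Hence ${^*\nu}(E)\le \lambda(M)/K\simeq 0$. Lemma \ref{absolute} then gives ${^*\lambda}(E)\simeq 0$, which is the claim.

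\textbf{Main obstacle.} The delicate point is the atom decomposition of the $*$-finite algebra $\mathcal{M}_0$. I would get it by transfer of the standard fact that every finite algebra of sets is generated by finitely many disjoint atoms, whose union is the whole space. The same transfer is needed to see that the internal set $E$, defined from $f_0$, lies in $\mathcal{M}_0\subseteq{^*\mathcal{M}}$, so that both Lemma \ref{absolute} and the identity $\int_E f_0\,d{^*\nu}={^*\lambda}(E)$ apply to it.

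The identity for such $E$ is proved exactly as for ${^*S}$, since the first step used only that ${^*S}$ is a union of atoms. Note that Corollary \ref{key} is not needed for this lemma; it presumably enters later, when the standard part of $f_0$ is shown to be a genuine density.
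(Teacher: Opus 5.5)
Your proposal is correct and follows essentially the same route as the paper. You define $f_0$ atomwise on the $*$-finite algebra $\mathcal{M}_0$ as ${^*\lambda}(A)/{^*\nu}(A)$, bound ${^*\nu}(\{f_0\ge K\})\le\lambda(M)/K\simeq 0$, and then apply Lemma \ref{absolute}. Two details that the paper's proof leaves implicit are made explicit in your version: the treatment of the ${^*\nu}$-null atoms via transferred absolute continuity, which is needed for the first identity to hold exactly, and the Markov-type inequality.
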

\begin{proof}
To prove the first equality, for each atom $A \in \mathcal{M}_0$, if  $^*\nu(A)>0$, then define $f_0(x)  (x \in A)$ by
$ f_0(x) ={^*\lambda(A)}/{^*\nu(A)},$
otherwise set $f_0(x)=0  (x \in A)$. For the second equality, if otherwise, there exists an infinite $K_0$ such that
  $$ 0 \le {\int}_{f_0(x) \ge K_0} f_0(x) d{^*\!\nu}  \le {^*\!\lambda}(^*M) , $$ 
which means ${^*\nu}(\{x \in {^*M} | f_0 (x) \ge K_0\}) \simeq 0$ because ${^*\lambda}(\,^*\!M)$ is finite by assumption. By Lemma \ref{absolute}, the absolute continuity of $\mu$ with respect to $\nu$ yields
  $$ {\int}_{f_0(x) \ge K_0} f_0(x) d{^*\!\nu} = {^*\mu}(\{x \in {^*M} | f_0 (x) \ge K_0\}) \simeq 0. $$
\end{proof}

\begin{lemma}
For each $a \in \mathbb{R}$, let $F_0(a) = \{ x \in {^*M} | f_0(x) \ge {^*a} \} \in \mathcal{M}_0$. Then there exists a measurable set $F(a) \in \mathcal{M}$ such that
 $^*\nu(F_0(a) \ominus {^*F(a)}) = 0$
\end{lemma}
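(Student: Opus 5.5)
The plan is to realize $F(a)$ as the positive set of a Hahn decomposition for a suitable standard signed measure, and then use Corollary \ref{key} to compare it with $F_0(a)$. For fixed $a\in\mathbb{R}$, define the standard finite signed measure $\mu_a = \lambda - a\nu$ on $(M,\mathcal{M})$. Since $f_0$ is constant on each atom of $\mathcal{M}_0$, the set $F_0(a)$ is a union of atoms, so $F_0(a)\in\mathcal{M}_0$. On an atom $A$ with ${^*\nu}(A)>0$ we have ${^*\mu_a}(A) = {^*\lambda}(A) - a\,{^*\nu}(A) = (f_0|_A - a)\,{^*\nu}(A)$. Hence $A\subseteq F_0(a)$ iff ${^*\mu_a}(A)\ge 0$.

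The first step is to show that $F_0(a)$ is the maximizer $M_0^+$ for ${^*\mu_a}$ over $\mathcal{M}_0$, in the sense of the second Corollary, applied with $\mu=\mu_a$. Since ${^*\mu_a}$ is additive on the $*$-finite algebra, its maximum over $\mathcal{M}_0$ is attained exactly by sets containing all atoms of positive ${^*\mu_a}$-measure and no atoms of negative measure. Atoms of zero ${^*\mu_a}$-measure may be included or excluded. Among the maximizers, the one with the largest ${^*\nu}$ contains every zero-${^*\mu_a}$ atom of positive ${^*\nu}$-measure. An atom with ${^*\nu}(A)=0$ has ${^*\lambda}(A)=0$ by the absolute continuity transferred, and then $f_0=0$ on it. These null atoms do not affect ${^*\nu}$ at all.

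Thus $F_0(a)$ differs from $M_0^+$ at most by a ${^*\nu}$-null union of atoms. I would simply choose $M_0^+$ to be $F_0(a)$ up to such null atoms. Alternatively, I would note that symmetric differences of ${^*\nu}$-measure $0$ are harmless for the conclusion.

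Next, let $F(a)=M^+$ be the standard set given by Corollary \ref{Hahn} for $\mu_a$. By Corollary \ref{key}, ${^*\nu}(M_0^+\ominus{^*F(a)})=0$. Combining this with ${^*\nu}(F_0(a)\ominus M_0^+)=0$ and the triangle inequality for symmetric differences gives ${^*\nu}(F_0(a)\ominus{^*F(a)})=0$.

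The main obstacle is the identification of $F_0(a)$ with the distinguished maximizer $M_0^+$. In particular, the tie-breaking condition (maximal ${^*\nu}$) must select exactly the atoms where $f_0=a$, which is where the non-strict inequality $\ge$ in $F_0(a)$ matters. The atoms with ${^*\nu}(A)=0$ must also be handled. I would treat this by a careful atom-by-atom comparison as sketched, using that $\lambda\ll\nu$ transfers to give ${^*\lambda}(A)=0$ whenever ${^*\nu}(A)=0$.
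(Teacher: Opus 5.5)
Your proposal is correct and follows the paper's own route: apply Corollary~\ref{key} to $\mu=\lambda-a\nu$, take $F(a)=M^+$, and identify $F_0(a)$ with $M_0^+$. The paper simply asserts $F_0(a)=M_0^+$, and your atom-by-atom check supplies that step, including the caveat that $F_0(a)$ can differ from a given $M_0^+$ only on ${^*\nu}$-null atoms, where ${^*\lambda}$ vanishes by transferred absolute continuity.
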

\begin{proof}
Apply  Corollary \ref{key} for $ \mu = \lambda - a\nu$. It follows that $F_0(a) = M_0^+$ and $^*F(a) = {^*(M)^+}$.
\end{proof}

\begin{corollary}\label{main2}
For each $a < b \in \mathbb{R}$, let $F_0(a, b) = \{ x \in {^*M} | {^*a \le f_0(x)} < {^*b} \} \in \mathcal{M}_0$. Then there exists a measurable set $F(a, b) \in \mathcal{M}$ such that
 $^*\nu(F_0(a, b) \ominus {^*F(a, b)}) = 0$
 \end{corollary}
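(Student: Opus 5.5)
Since $F_0(a,b) = F_0(a) - F_0(b)$ (for $a<b$, the set where ${^*a} \le f_0 < {^*b}$ is the set where $f_0 \ge {^*a}$ minus the set where $f_0 \ge {^*b}$, and $F_0(b) \subseteq F_0(a)$), the plan is to set
$$F(a,b) = F(a) - F(b) \in \mathcal{M},$$
with $F(a), F(b)$ supplied by the preceding lemma. Then ${^*F(a,b)} = {^*F(a)} - {^*F(b)}$ by transfer, since the $*$-map commutes with Boolean operations. It remains to estimate the symmetric difference of two set differences.

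The key inclusion is the elementary set identity, valid for arbitrary sets,
$$(A - B) \ominus (C - D) \subseteq (A \ominus C) \cup (B \ominus D).$$
To check it, take $x \in A-B$ with $x \notin C - D$. Then either $x \notin C$, so $x \in A\ominus C$, or $x \in D$, so $x \in D - B \subseteq B \ominus D$. The other side is symmetric. Applying this with $A = F_0(a)$, $B = F_0(b)$, $C = {^*F(a)}$, $D = {^*F(b)}$ gives
$$F_0(a,b) \ominus {^*F(a,b)} \subseteq \bigl(F_0(a) \ominus {^*F(a)}\bigr) \cup \bigl(F_0(b) \ominus {^*F(b)}\bigr).$$

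All the sets involved lie in ${^*\mathcal{M}}$. This holds because $\mathcal{M}_0 \subseteq {^*\mathcal{M}}$ and ${^*\mathcal{M}}$ is an internal $\sigma$-algebra. By monotonicity and finite subadditivity of ${^*\nu}$ (transferred from $\nu$), the ${^*\nu}$-measure of the left side is at most $0 + 0 = 0$, using the preceding lemma twice. Since ${^*\nu} \ge 0$, it equals $0$.

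No step is a real obstacle. The only point needing care is the identification $F_0(a,b) = F_0(a) - F_0(b)$, which uses that ${^*a} < {^*b}$, together with measurability of the sets so that ${^*\nu}$ applies. Both are immediate.
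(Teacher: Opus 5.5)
Your proof is correct and is exactly the argument the paper leaves implicit, since the paper states this as an immediate corollary of the preceding lemma with no written proof: take $F(a,b) = F(a) - F(b)$ and bound $F_0(a,b) \ominus {^*F(a,b)}$ by $\bigl(F_0(a)\ominus {^*F(a)}\bigr) \cup \bigl(F_0(b) \ominus {^*F(b)}\bigr)$. One small remark: the identity $F_0(a,b) = F_0(a) - F_0(b)$ holds for all real $a,b$, so it does not actually use ${^*a} < {^*b}$.
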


\begin{theorem}{\rm (the Radon-Nikodym Theorem)}
There exists a non-negative measurable function $f: M \rightarrow \mathbb{R}$ such that for any $S \in \mathcal{M}$,
$$  \lambda(S) = \int_S f(x) d\nu. $$ 
\end{theorem}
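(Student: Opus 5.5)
We will build $f$ from the sets $F(a)$ attached to $f_0$ by the preceding lemma, and then use Lemma~\ref{main1} together with Corollary~\ref{main2} to move the integral identity from ${^*M}$ down to $M$.

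\emph{Construction of $f$.} For rational $a \ge 0$, take $F(a) \in \mathcal{M}$ with ${^*\nu}(F_0(a) \ominus {^*F(a)}) = 0$. For rationals $a<b$ we have $F_0(b) \subseteq F_0(a)$, so ${^*\nu}({^*F(b)} - {^*F(a)}) = 0$. By transfer, $\nu(F(b) - F(a)) = 0$. Discarding the countable union of these null differences (a $\nu$-null set $N$), we may assume the family $F(a)$ is decreasing in $a$ on $M - N$. Define
$$f(x) = \sup\{a \in \mathbb{Q}_{\ge 0} : x \in F(a)\}$$
on $M-N$, and set $f=0$ on $N$ and wherever the supremum is infinite. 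Then $f$ is measurable and non-negative, and $\{f \ge a\}$ agrees with $F(a)$ up to a $\nu$-null set. In the same way, $\{a \le f < b\}$ agrees $\nu$-a.e.\ with $F(a,b) = F(a) - F(b)$, which is the set of Corollary~\ref{main2}.

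\emph{Finiteness of $f$ a.e.} By Lemma~\ref{main1}, $\lambda(M) = {^*\lambda}({^*M}) \ge {^*a}\,{^*\nu}(F_0(a))$, so $\nu(F(a)) = {^*\nu}(F_0(a)) \le \lambda(M)/a \to 0$. Hence $\nu(\{f = \infty\}) = 0$.

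\emph{The integral identity.} Fix $S \in \mathcal{M}$, a standard step $h>0$, and a standard $n$. By Corollary~\ref{main2}, for $0 \le k < n$ we get
$${^*\nu}\bigl({^*S} \cap F_0(kh,(k+1)h)\bigr) = \nu\bigl(S \cap F(kh,(k+1)h)\bigr),$$
because the symmetric difference is ${^*\nu}$-null. Hence
$$\int_{{^*S} \cap \{f_0 < nh\}} f_0 \, d{^*\nu} \quad\text{and}\quad \int_{S \cap \{f < nh\}} f \, d\nu$$
both lie within $h\,\nu(M)$ of the standard Riemann-type sum $\sum_{k<n} kh\,\nu(S \cap F(kh,(k+1)h))$. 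Therefore their standard parts differ by at most $2h\nu(M)$. Letting $h \to 0$ with $nh$ fixed, we obtain
$$\int_{S\cap\{f<c\}} f\,d\nu = {\rm st}\Bigl(\int_{{^*S}\cap\{f_0<{^*c}\}} f_0\,d{^*\nu}\Bigr) \quad \text{for every standard } c.$$

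\emph{Main obstacle: passing $c \to \infty$.} Write
$$G(c) = \int_{{^*S}\cap\{f_0\ge {^*c}\}} f_0\,d{^*\nu}.$$
Then
$${^*\lambda}({^*S}) = \int_{{^*S}\cap\{f_0<{^*c}\}} f_0\,d{^*\nu} + G(c),$$
so it suffices to show ${\rm st}\,G(c) \to 0$ as standard $c \to \infty$. Suppose not: then there are $\epsilon>0$ and arbitrarily large standard $c$ with $G(c) \ge \epsilon$. The set of $c \in {^*\mathbb{R}}$ with $G(c) \ge \epsilon$ is internal and contains arbitrarily large standard $c$. By overspill it contains an infinite $K$, and since $G$ is decreasing this gives $G(K) \ge \epsilon$. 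This contradicts the second assertion of Lemma~\ref{main1}.

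Monotone convergence then gives
$$\lambda(S) = \lim_{c\to\infty} \int_{S\cap\{f<c\}} f\,d\nu = \int_S f\,d\nu.$$
The delicate points are exactly this uniform integrability and overspill step, together with the care needed to ensure that the internal integrals over the $*$-finite algebra $\mathcal{M}_0$ are the ones Corollary~\ref{main2} controls.
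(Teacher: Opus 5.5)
Your overall strategy works, and the overspill step showing ${\rm st}\,G(c)\to 0$ is correct. However, one claim is asserted without justification: that $\{f\ge a\}$ agrees with $F(a)$ up to a $\nu$-null set, and hence that $\{a\le f<b\}$ agrees with $F(a)-F(b)$.

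From your construction alone (a family decreasing off $N$, with $f$ its supremum) you only get $\{f>a\}\subseteq F(a)\subseteq\bigcap_{q<a}F(q)=\{f\ge a\}$ off a null set. Equality needs $\nu(F(a))=\lim_{q\uparrow a}\nu(F(q))$, i.e.\ ${^*\nu}(\{x \in {^*M} \mid {^*q}\le f_0(x)<{^*a}\})\to 0$ as standard $q\uparrow a$. That does not follow formally. The decreasing family $F(q)=M$ for $q<1$, $F(q)=\emptyset$ for $q\ge 1$ gives $f\equiv 1$ but $F(1)=\emptyset$. A priori an internal $f_0$ could also carry non-infinitesimal ${^*\nu}$-mass at values infinitely close to, but below, $a$. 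Your Riemann-sum estimate for $\int_{S\cap\{f<nh\}}f\,d\nu$ uses this identification at every grid point $kh$.

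The claim is true here, but proving it uses the structure of $f_0$. Let $D=\bigcap_{q<a}F(q)-F(a)$. Each $F(q)$ is, up to a null set, a positive set for $\lambda-q\nu$: by the proof of the lemma preceding Corollary~\ref{main2}, it is the Hahn set $M^+$ for $\lambda-q\nu$. Hence $\lambda(E)\ge a\,\nu(E)$ for every measurable $E\subseteq D$. By transfer, every atom $A\subseteq{^*D}$ of $\mathcal{M}_0$ with ${^*\nu}(A)>0$ has $f_0={^*\lambda}(A)/{^*\nu}(A)\ge a$, so ${^*D}-F_0(a)$ is ${^*\nu}$-null. Since ${^*D}\cap{^*F(a)}=\emptyset$ and $F_0(a)\ominus{^*F(a)}$ is null, ${^*\nu}({^*D})=0$, i.e.\ $\nu(D)=0$.

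Alternatively, you can sidestep the claim. The sandwich already gives $kh\le f\le (k+1)h$ a.e.\ on $F(kh)-F((k+1)h)$. So your two estimates yield $\int_{S-F(c)}f\,d\nu={\rm st}\int_{{^*S}\cap\{f_0<c\}}f_0\,d{^*\nu}$ for rational $c$, and $S-F(c)\uparrow S$ a.e.\ since $\nu(F(c))\to 0$. The rest of your argument is unchanged.

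With this repaired, your proof is a genuinely different arrangement from the paper's. The paper defines $f$ directly as the increasing limit of the dyadic simple functions $f_n$ built from the sets $F(k/2^n,(k+1)/2^n)$. It bounds $\int_{^*M}|f_0-{^*f_n}|\,d{^*\nu}$ by $\int_{f_0\ge n}f_0\,d{^*\nu}+2^{-n}\nu(M)$, overflows to an infinite $K$, and passes from ${^*f_K}$ to ${^*f}$ by transferred monotone convergence. This gives the stronger statement $\int_{^*M}|f_0-{^*f}|\,d{^*\nu}\simeq 0$. It also never requires identifying level sets of $f$ with the $F(a)$, which is exactly where your route needs extra care. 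In return, your version stays at standard truncation levels and applies overspill only to the scalar tail $G$, which is arguably more transparent.
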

\begin{proof}
We use the notations of  Lemma \ref{main1} and Corollary \ref{main2}. Denote the characteristic function of  $S (S \in \mathcal{M})$ by $\mathbb J _S$. For $a<b \in \mathbb{R}$, let $\mathbb{I}(a,b) = \mathbb{J}_{F(a,b)}$.
For each $n \in \mathbb{N}$ define standard $f_n(x)$ and $ f(x)$  by
$$ f_n(x) = \sum_{k=0}^{n2^n-1} \frac{k}{2^n}\mathbb I(\frac{k}{2^n}, \frac{k+1}{2^n})  \uparrow f(x) \ \ \ (n \uparrow \infty). $$
By construction, $f_0(x) \ge {^*f_n(x)}$ and if $f_0(x) < n$ then
$$ 0 \le f_0(x) - {^*f_n(x)} \le \frac{1}{2^n}, $$
with the help of Corollary \ref{main2}.
Thus for any $n \in \mathbb{N^+}$
$$  {\int}_{^*M} |f_0(x)-{^*f_n(x)}|  d{^*\!\nu} \le  {\int}_{f_0(x) \ge n} f_0(x)  d{^*\!\nu} + \frac{1}{2^n}{^*\mu(^*M)}.  $$
Using the Overflow Principle, we obtain for some infinite $K \in \mathbb{^*N^+}$
$$  {\int}_{^*M} |f_0(x)-{^*f_K(x)}|  d{^*\!\nu} \le  {\int}_{f_0(x) \ge K} f_0(x)  d{^*\!\nu} + \frac{1}{2^K}{^*\mu(^*M)} \simeq 0, $$
by Lemma \ref{main1}. The monotone convergence theorem yields
$$  {\int}_{^*M} |^*f_K(x)-{^*f(x)}|  d{^*\!\nu} \simeq 0. $$
Hence we obtain 
$$  {\int}_{^*M} |f_0(x)-{^*f(x)}|  d{^*\!\nu} \simeq 0,  $$
which completes the proof.
\end{proof}

\section{Concluding Remarks}
To the author's knowledge, this is a shortest proof  of the Radon-Nikodym theorem for those familiar with nonstandard analyis.

\subsection*{Disclosure statement and funding}
There are no interests to declare. No funding was received.

\section{Appendix: a quick introduction to nonstandard analysis }
In this appendix, we assume that the reader is fimiliar with basics of (naive) set theory and first order logic.

Nonstandard analysis is a theory founded by Abraham Robinson in the 1960s, motivated largely by the revival of Leibnizian infinitesimals.  In essence, he constructed a nonstandard extension $^*U$ (Theorem \ref{extension}) of a universe (Definition \ref{universe}) containing $\mathbb{R}$. $^*U$ contains a proper extension of $\mathbb{R}$ denoted by $^*\mathbb{R}$, which is logically similar to $\mathbb{R}$ but includes ideal elements such as infinitesimals and infinite numbers.

We need several definitions and lemmas to prove Thoerem \ref{extension}.

\begin{definition}\label{universe}(Universe)
A universe $U$ is a set satisfying the following conditions:
\begin{enumerate}
\item $u \in v$ and $v \in U$ imply $u \in U$, 
\item $u \in U$ and $v \in U$ imply $\{u, v\} \in U$,
\item $u \in U$ implies $\bigcup u \in U$,
\item $u \in U$ implies $\mathcal{P}(u) \in U$, where $\mathcal{P}(u)$ is the power set of $u$. 
\end{enumerate}
\end{definition}

\begin{definition}(Formula without Constants)
A formula without constants is defined by the folloing rules only: 
\begin{enumerate}
\item For the variables $x$ and $y$, $x=y$ and $x \in y$ are (atomic) formulae without constants,
\item If $\phi$ is a formula without constants, $\lnot\phi$, $\exists x \phi$, and $\forall x \phi$ are formulae without constants, 
\item If $\phi$ and $\psi$ are formulae without constants, $\phi \land \psi$, $\phi \lor \psi$, and $\phi \to \psi \equiv \lnot\phi \lor \psi $ are formlae without constants, 
\end{enumerate}
A bounded variable $x$ in $\phi$ is a variable that appears in the form of  $\exists x \phi$ or $\forall x \phi$. Other variables in $\phi$ are free variables. 

The free variables in a formula $\phi$ is often indicated explicitly by $\phi(x_1, x_2, \cdots, x_n)$.
For a universe $U$, a formula without constants $\phi(x_1, x_2, \cdots, x_n)$, and the constants $c_1, c_2, \cdots, c_n \in U$, we can verify whether $\phi(c_1, c_2, \cdots c_n)$ holds or not in $U$ by letting the bounded variables in $\phi$ range over $U$. 

As usual, $\exists x \in a [\phi] \equiv \exists x [x \in a \land \phi]$, $\forall x \in a [\phi] \equiv  \forall x [x \in a \to \phi]$, and $\exists !x [\phi(x)] \equiv \exists x [\phi(x) \land \forall y [\phi(y) \to y=x]]$.
\end{definition}

\begin{definition}(Internal Formula without Constants)
For a formula without constants $\phi$, the internal formula without constants $^*\phi$ is given by replacing all the $\in$'s in $\phi$ with the binary relation $^*\!\!\in$ on $^*U$ (defined in Theorem  \ref{extension}). As in the previous definition, for the constants $c_1, c_2, \cdots, c_n \in {^*U}$, $^*\phi$ holds or not in $^*U$.
\end{definition}

\begin{example}(Universe containing $\mathbb{R}$)
Let $V_0 = \mathbb{R}$ and define $V_{n+1}$ by $V_{n+1} = \bigcup V_n$ inductively. Set $U_0 = \bigcup V_n$. Define $U_{n+1}$ by $U_{n+1} = U_n \cup \mathcal{P}(U_n)$ inductively. Set $U = \bigcup U_n$. It is straightforward to verify that $U$ is a universe.
\end{example}
Note that any universe containing $\mathbb{R}$ has various mathematical objects for analysis. Usually we adopt a universe $U$ that has all relevant mathematical objects.

\begin{definition}(Filter Basis, Filter, Ultrafilter)
A filter basis $\mathcal{F}$ on a set $I$ is a subset of $\mathcal{P}(I)$ satisfiying (1) and (2). A filter $\mathcal{F}$ is a filter basis satisfying (3). An ultrafilter $\mathcal{F}$ is a filter satisfying (4). 
\begin{enumerate}
\item $\phi \not\in \mathcal{F}$ and $I \in \mathcal{F}$, 
\item If $A \in \mathcal{F}$ and $B \in \mathcal{F}$, then $A \cap B \in \mathcal{F}$,
\item If  $A \in \mathcal{F}$ and $A \subseteq B$, then $B \in \mathcal{F}$,
\item If $\mathcal{G}$ is a filter and $\mathcal{F} \subseteq \mathcal{G}$, then $\mathcal{F} = \mathcal{G}$, that is, $\mathcal{F}$ is maximal under $\subseteq$.
\end{enumerate}
\end{definition}

\begin{lemma}\label{ultraexist}
For a filter basis $\mathcal{F}_0$ on $I$, there exists an ultrafilter $\mathcal{F}$ containing $\mathcal{F}_0$.
\end{lemma}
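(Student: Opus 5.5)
We prove Lemma \ref{ultraexist} by Zorn's Lemma applied to the family of filters containing $\mathcal{F}_0$. First we replace the filter basis by the filter it generates. Let
$$\mathcal{G}_0 = \{ B \subseteq I \mid A \subseteq B \text{ for some } A \in \mathcal{F}_0\}.$$
We check that $\mathcal{G}_0$ is a filter. Since $\emptyset \notin \mathcal{F}_0$, every $B \in \mathcal{G}_0$ contains a nonempty set and so is nonempty. Also $I \in \mathcal{F}_0 \subseteq \mathcal{G}_0$. If $A \subseteq B$ and $A' \subseteq B'$ with $A, A' \in \mathcal{F}_0$, then $A\cap A' \in \mathcal{F}_0$ and $A \cap A' \subseteq B \cap B'$, which gives condition (2). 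Upward closure (3) holds by construction. Clearly $\mathcal{F}_0 \subseteq \mathcal{G}_0$.

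Next let $\Phi$ be the set of all filters on $I$ containing $\mathcal{F}_0$, partially ordered by inclusion. It is nonempty because $\mathcal{G}_0 \in \Phi$. Given a nonempty chain $\mathcal{C} \subseteq \Phi$, we show that $\mathcal{H} = \bigcup \mathcal{C}$ is a filter and hence an upper bound in $\Phi$. Conditions (1) and (3) hold elementwise: $\emptyset$ lies in no member of $\mathcal{C}$, $I$ lies in every member, and upward closure is inherited from the member containing the given set. For (2), take $A \in \mathcal{F}_1$ and $B \in \mathcal{F}_2$ with $\mathcal{F}_1, \mathcal{F}_2 \in \mathcal{C}$. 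Because $\mathcal{C}$ is a chain, both sets lie in the larger of $\mathcal{F}_1$ and $\mathcal{F}_2$, so $A\cap B$ lies there too. This chain step is the only place where the linear ordering is used, and it is the main point to verify carefully.

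Zorn's Lemma then gives a maximal element $\mathcal{F} \in \Phi$. It remains to check condition (4). Let $\mathcal{G}$ be any filter with $\mathcal{F} \subseteq \mathcal{G}$. Then $\mathcal{G}$ contains $\mathcal{F}_0$, so $\mathcal{G} \in \Phi$, and maximality forces $\mathcal{G} = \mathcal{F}$. Thus $\mathcal{F}$ is an ultrafilter containing $\mathcal{F}_0$.
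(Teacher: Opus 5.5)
Your proposal is correct and follows the paper's route: pass to the filter generated by $\mathcal{F}_0$, then obtain a maximal filter by Zorn's lemma, which the paper only sketches with its informal ``and so on'' chain of strictly larger filters. Your check that the union of a nonempty chain of filters containing $\mathcal{F}_0$ is again such a filter is exactly the detail the paper's appeal to Zorn leaves implicit.
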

\begin{proof}
Let $\mathcal{F}_1 = \{ X \in \mathcal{P}(I) | X \supseteq A \ {\rm for \ some} \ A \in \mathcal{F}_0 \}$. It is easy to verify that $\mathcal{F}_1$ is a filter. If $\mathcal{F}_1$ is not maximal, there exsits a filter $\mathcal{F}_2 \supsetneq \mathcal{F}_1$. If $\mathcal{F}_2$ is not maximal, there exsits a filter $\mathcal{F}_3 \supsetneq \mathcal{F}_2$ and so on. Finally we have a maximal $\mathcal{F}$. More formally, the existence of $\mathcal{F}$ follows by Zorn's lemma. 
\end{proof}

\begin{lemma}\label{either}
For an ultrafilter $\mathcal{F}$ on $I$ and $A \subseteq I$, either $A \in \mathcal{F}$ or $I - A \in \mathcal{F}$ holds.
\end{lemma}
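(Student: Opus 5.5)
The plan is a proof by contradiction that uses only the defining properties (1)--(4) of a filter and an ultrafilter. Suppose that for some $A \subseteq I$ neither $A \in \mathcal{F}$ nor $I - A \in \mathcal{F}$. We will build a filter $\mathcal{G}$ that strictly contains $\mathcal{F}$, which contradicts the maximality (4) of $\mathcal{F}$.

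First we show that $A$ meets every member of $\mathcal{F}$, that is, $A \cap X \neq \emptyset$ for all $X \in \mathcal{F}$. Suppose instead that $A \cap X = \emptyset$ for some $X \in \mathcal{F}$. Then $X \subseteq I - A$, and the upward closure (3) gives $I - A \in \mathcal{F}$, against our assumption.

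Next, define
$$\mathcal{G} = \{ Y \in \mathcal{P}(I) \mid Y \supseteq A \cap X \ \text{for some} \ X \in \mathcal{F} \}.$$
We check that $\mathcal{G}$ is a filter.
\begin{enumerate}
\item $\emptyset \notin \mathcal{G}$, because every $A \cap X$ with $X \in \mathcal{F}$ is nonempty by the first step. Also $I \in \mathcal{G}$, since $I \supseteq A \cap I$ and $I \in \mathcal{F}$.
\item $\mathcal{G}$ is closed under intersections. If $Y_1 \supseteq A \cap X_1$ and $Y_2 \supseteq A \cap X_2$, then $Y_1 \cap Y_2 \supseteq A \cap (X_1 \cap X_2)$, and $X_1 \cap X_2 \in \mathcal{F}$ by (2).
\item $\mathcal{G}$ is upward closed directly from its definition.
\end{enumerate}

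Finally, $\mathcal{F} \subseteq \mathcal{G}$, because each $X \in \mathcal{F}$ satisfies $X \supseteq A \cap X$. Moreover $A \in \mathcal{G}$, since $A \supseteq A \cap I$, while $A \notin \mathcal{F}$ by assumption. Hence $\mathcal{F} \subsetneq \mathcal{G}$, contradicting (4). It follows that $A \in \mathcal{F}$ or $I - A \in \mathcal{F}$.

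The step that needs the most care is the first one: the nonemptiness of every $A \cap X$ is exactly where the hypothesis $I - A \notin \mathcal{F}$ is used, and it is what makes $\mathcal{G}$ satisfy condition (1). Everything after that is routine verification.
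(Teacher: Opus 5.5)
Your proof is correct and is essentially the paper's argument. The paper assumes $A \notin \mathcal{F}$ and uses $\mathcal{G}=\{X \in \mathcal{P}(I) \mid X\cup A\in\mathcal{F}\}$, which is exactly the filter generated by $\mathcal{F}$ and $I-A$, so maximality gives $\mathcal{G}=\mathcal{F}\ni I-A$ directly; you instead adjoin $A$ and argue by contradiction, and your separate nonemptiness step plays the role that the union description (where $\emptyset\notin\mathcal{G}$ is literally $A\notin\mathcal{F}$) makes immediate in the paper.
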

\begin{proof}
Suppose that $A \not\in \mathcal{F}$. Let $\mathcal{G}  = \{X \in \mathcal{P}(I) | X \cup A \in \mathcal{F} \}$. It is a routine to check that $\mathcal{G}$ is a filter containing $\mathcal{F}$. Hence $\mathcal{G} = \mathcal{F}$ by hypothesis. Obviously $I-A \in \mathcal{G}$. This completes the proof. 
\end{proof}

\begin{definition}(Ultrapower)
Let $V$ be an infinite set and $I$ be an infinite index set. Let denote the set of all maps from $I$ to $V$ by $V^I$. Let $<a(i)>, <b(i)> \in V^I$. Define the equivalence relation on $V^I$ by $\{i \in I | a(i) =b(i) \} \in \mathcal{F}$ (this is well-defined if $\mathcal{F}$ is an ultrafilter on $I$). The ultrapower of $V$ over an ultrafilter $\mathcal{F}$ on $I$ is the set of equivalence classes of $V^I$.
\end{definition}

\begin{theorem}\label{extension}(Nonstandard Extension)
For a given universe $U$, there exsits a nonstandard extension $^*U$ and a binary relation $^* \! \! \in$ on $^*U$ that satisfy the following two conditions for any formula without constants $\phi$:
\begin{enumerate}
\item the Transfer Principle: There exists an injective map $^*: U \rightarrow {^*U}$ such that $\phi(c_1, c_2, \cdots, c_n)$ holds in $U$ if only if (iff) $^*\phi(^*c_1, ^*c_2, \cdots, ^*c_n)$ holds in $^*U$, where $c_1, c_2, \cdots, c_n \in U$ and $^*c_i$'s are the images of $c_i$'s under the map $^*$.
\item the Concurrence Principle: If $\phi(x, y, c_1, c_2, \cdots c_n)$ ($c_1, c_2, \cdots c_n \in U$) is "concurrent" with respect to $X (\subseteq U)$, then there exists an element $y \in {^*U}$ such that $^*\phi(\,^*x, y, ^*c_1, ^*c_2, \cdots, ^*c_n)$ holds in $^*U$ for all $x \in X$. Here "conccurent" with respect to $X$ means that for any finite collection $x_i \in X$, there exists an element $y \in U$ such that $\phi(x_i, y, c_1, c_2, \cdots, c_n)$ holds in $U$ for all $x_i$.
\end{enumerate}
\end{theorem}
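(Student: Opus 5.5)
We build ${^*U}$ as a bounded ultrapower of $U$. Take $I$ to be the set of all finite subsets of $U$. For each $a \in U$ let $\hat a = \{ i \in I \mid a \in i\}$. The family $\{\hat a \mid a \in U\}$ is a filter basis, because $\hat a \cap \hat b = \widehat{\{a,b\}}$ contains $\{a,b\}$ and is therefore nonempty. By Lemma \ref{ultraexist} there is an ultrafilter $\mathcal{F}$ on $I$ containing every $\hat a$.

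Next we define the extension itself.
\begin{itemize}
\item Write $U = \bigcup_n U_n$ (stratified as in the Example). Let ${^*U}$ be the set of equivalence classes $[a(i)]$ of maps $a: I \to U$ that are bounded, meaning $a(i) \in U_n$ for all $i$ and some fixed $n$. Equality is taken modulo $\mathcal{F}$.
\item Set $[a] \,{^*\!\!\in}\, [b]$ iff $\{i \mid a(i) \in b(i)\} \in \mathcal{F}$.
\item The map $^*$ sends $c$ to the class of the constant map $\langle c \rangle$. It is injective, since $\{i \mid c = d\}$ is either $I$ or $\emptyset$.
\end{itemize}

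The Transfer Principle will follow from Łoś's theorem: for every formula $\phi$ without constants and bounded maps $a_1,\dots,a_n$,
$$ {^*\phi}([a_1],\dots,[a_n]) \text{ holds in } {^*U} \iff \{ i \mid \phi(a_1(i),\dots,a_n(i)) \text{ holds in } U\} \in \mathcal{F}. $$
We prove this by induction on $\phi$.
\begin{itemize}
\item Atomic formulae hold by definition.
\item The case $\land$ uses closure of $\mathcal{F}$ under intersections and supersets. The case $\lor$ reduces to $\lnot$ and $\land$.
\item The case $\lnot$ uses Lemma \ref{either}, which gives that exactly one of $A$ and $I-A$ lies in $\mathcal{F}$.
\item For $\exists x$: if the set of $i$ with $\exists x\,\phi$ true lies in $\mathcal{F}$, choose a witness $b(i)$ for each such $i$ (axiom of choice). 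The induction hypothesis then gives ${^*\phi}([b],\dots)$. The converse is immediate. The case $\forall$ reduces to $\exists$ and $\lnot$.
\end{itemize}
Applying Łoś to constant maps gives exactly the Transfer Principle.

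\textbf{Main obstacle: boundedness.} In the $\exists$ case the chosen witness $b$ must again be a bounded map, or $[b]$ is not an element of ${^*U}$. Quantifiers in the formulae of the paper range over all of $U$, so a witness need not lie in a fixed level $U_n$. I would handle this by restricting Transfer to bounded quantifiers $\exists x \in y$. If $a(i) \in U_n$, any witness with $x \in a(i)$ also lies in $U_n$, because each $U_n$ is transitive. This is the standard form of Transfer, and all uses in the paper are bounded. Alternatively, one could allow arbitrary maps $I \to U$ and accept a nonstandard $\,{^*\!\!\in}$ that is not well-founded; ${^*U}$ remains a model, and Łoś holds verbatim.

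\textbf{Concurrence.} Suppose $\phi(x,y,\vec c)$ is concurrent with respect to $X$. For each finite $i \in I$, concurrence applied to the finite set $i \cap X$ gives some $y(i) \in U$ with $\phi(x, y(i), \vec c)$ for all $x \in i \cap X$. For $x \in X$ we have
$$\{ i \mid \phi(x, y(i), \vec c)\} \supseteq \hat x \in \mathcal{F}.$$
By Łoś, ${^*\phi}({^*x}, [y], {^*\vec c})$ holds for every $x \in X$. Boundedness of $y$ needs the same care as in the $\exists$ case. For the bounded formulae actually used in the paper, such as membership in ${^*\mathcal{M}}$, the $y(i)$ lie in a fixed $U_n$.
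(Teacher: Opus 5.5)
The construction you actually commit to, the bounded ultrapower with Transfer restricted to bounded quantifiers, proves a strictly weaker result than the theorem. As written, there is a genuine gap.

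The statement asserts Transfer for \emph{every} formula without constants, with quantifiers ranging over all of $U$ and of ${^*U}$. It asserts Concurrence for \emph{every} $\phi$, and it does so for an arbitrary universe $U$. Your notion of boundedness also presupposes the stratification $U=\bigcup_n U_n$ of the particular example. The bounded ultrapower violates both principles, not merely your proof of them.
\begin{itemize}
\item For Concurrence, take $\phi(x,y)\equiv x\in y$ and $X=U$. This is concurrent because $U$ is closed under pairs and unions. But suppose $y(i)\in U_n$ for all $i$. Since $U_n$ is transitive, $U_n\in y(i)$ would give $U_n\in U_n$, which foundation forbids. Hence $\{i\mid U_n\in y(i)\}=\emptyset\notin\mathcal F$, so ${^*U_n}\,{^*\!\!\in}\,[y]$ fails.
\item For Transfer, consider the sentence ``for every $m\in\mathbb N$ there is a finite function $s$ on $\{0,\dots,m\}$ with $s(0)=U_0$ and $s(k)\in s(k+1)$ for $k<m$''. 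It is true in $U$: take $s(k)=U_k$. At the nonstandard $m=[\,i\mapsto|i|\,]$, your bounded \L{}o\'s theorem forces any witness to have rank at least $\mathrm{rank}(U_0)+|i|$ for $\mathcal F$-almost all $i$. So no bounded witness exists.
\end{itemize}
Your remark that all uses in the paper are bounded is also inaccurate. The appendix transfers $\forall x,y\,\exists! z\,\forall u\,[\cdots]$ in the $*$-pair example, and $\forall x_1\cdots\forall x_n\,\exists n_0\in\mathbb N\,[\cdots]$ in the Overflow proof.

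The alternative you mention in passing is exactly the paper's proof, and it removes the obstacle completely. Take ${^*U}=U^I/\mathcal F$ with \emph{all} maps $I\to U$. The witnesses $b(i)$ in the $\exists$-step and the $y(i)$ in the Concurrence step, chosen by the axiom of choice, are then automatically elements of $U^I$. So your \L{}o\'s induction goes through for all formulas, and Transfer follows from constant maps. Concurrence follows from $\{i\mid \phi(x,y(i),\vec c)\}\supseteq\{i\mid x\in i\}\in\mathcal F$. The loss of well-foundedness of ${^*\!\!\in}$ is harmless, since the theorem never asks for it. You should commit to that version.

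One cosmetic point concerns your filter basis. With your definition, $\widehat{\{a,b\}}$ is $\{i\mid \{a,b\}\in i\}$, not $\{i\mid a,b\in i\}$. Also, the paper's definition of a filter basis requires $I$ to belong to it and closure under $\cap$, which $\{\hat a\mid a\in U\}$ lacks. Pass to finite intersections instead; these are precisely the paper's sets $\mu(i)=\{j\in I\mid i\subseteq j\}$, with $\mu(\emptyset)=I$.
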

\begin{proof}
The construction of $^*U$ proceeds as follows. Consider the index set $I$ of all finite subsets of $U$. For each $i \in I$, let $\mu(i) = \{ j \in I \mid i \subseteq j \}$. Since $\mu(i_1) \cap \mu(i_2) \cap \cdots \cap \mu(i_n) = \mu(i_1 \cup i_2 \cup \cdots \cup i_n)$, this family is a filter basis on $I$ so that  there exists an ultrafilter $\mathcal{F}$ on $I$ containing all the $\mu(i)$'s by Lemma \ref{ultraexist}.  Denote the ultrapower of $U$ over $\mathcal{F}$ by $^*U$ and the equivalence classes by $[a(i)], [b(i)]$. The binary relation $[a(i)] \, {^*\!\!\in}\ [b(i)]$ is also well-defined by $\{ i \in I \mid a(i) \in b(i) \} \in \mathcal{F}$. The map * is defined by $a \mapsto [a(i) = a]$. This map is injective by definition.

The Transefer Principle is the special case of the next lemma. For the Concurrence Principle, by assumption, there exists an element $y(i) \in U$ such that $\phi(x, y(i), c_1, c_2, \cdots, c_n)$ holds for all $x \in i \cap X$ so that we obtain the desired result again using the next lemma.
\end{proof}

\begin{lemma}(\cancel{L}os's Theorem)
Under the same situation as in the proof of Theorem \ref{extension}, for any formula without constants $\phi(x,y,\dots,z)$, $^*\phi([a(i)], [b(i)], \cdots, [c(i)])$ holds in $^*U$ iff $\{i \in I \ |\ \phi(a(i), b(i), \cdots ,c(i)) {\rm \ holds \ in} \ U \} \in \mathcal{F}$.
\end{lemma}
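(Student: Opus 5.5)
We prove Łoś's Theorem by induction on the construction of the formula $\phi$, simultaneously for all choices of the elements $[a(i)], [b(i)], \dots, [c(i)] \in {^*U}$. Write $\|\phi(a,b,\dots,c)\| = \{i \in I \mid \phi(a(i), b(i), \dots, c(i)) \text{ holds in } U\}$; the claim is that ${^*\phi}([a(i)],\dots,[c(i)])$ holds in ${^*U}$ iff $\|\phi(a,\dots,c)\| \in \mathcal{F}$.

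\textbf{Atomic formulae.} For $x = y$ the claim is the definition of the equivalence relation defining the ultrapower. For $x \in y$ it is the definition of ${^*\!\!\in}$. Both definitions are independent of representatives, since $\mathcal{F}$ is closed under finite intersections and supersets.

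\textbf{Connectives.} For $\phi \land \psi$ we use $\|\phi \land \psi\| = \|\phi\| \cap \|\psi\|$. A filter contains the intersection iff it contains both sets, by filter properties (2) and (3). For $\lnot \phi$ we use $\|\lnot\phi\| = I - \|\phi\|$. By Lemma \ref{either} exactly one of $\|\phi\|$ and $I - \|\phi\|$ lies in $\mathcal{F}$; they cannot both lie in it, since their intersection is empty and $\emptyset \notin \mathcal{F}$. This gives the equivalence. The connectives $\lor$ and $\to$, and the quantifier $\forall$, reduce to $\lnot$, $\land$, $\exists$ by the usual equivalences. Since ${^*}$ commutes with these rewritings, it suffices to treat $\exists$.

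\textbf{Existential quantifier.} Suppose ${^*}\exists x\,\phi(x,[b(i)],\dots)$ holds in ${^*U}$. Then there is $[a(i)] \in {^*U}$ with ${^*\phi}([a(i)],[b(i)],\dots)$. By the induction hypothesis $\|\phi(a,b,\dots)\| \in \mathcal{F}$, and this set is contained in $\|\exists x\,\phi(x,b,\dots)\|$, so the latter lies in $\mathcal{F}$ by property (3).

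For the converse, suppose $J = \|\exists x\,\phi(x,b,\dots)\| \in \mathcal{F}$. For each $i \in J$ we choose, using the axiom of choice, a witness $a(i) \in U$ with $\phi(a(i),b(i),\dots)$ holding in $U$. For $i \notin J$ we let $a(i)$ be an arbitrary element of $U$. Then $\|\phi(a,b,\dots)\| \supseteq J$, so it lies in $\mathcal{F}$. By the induction hypothesis ${^*\phi}([a(i)],[b(i)],\dots)$ holds, and therefore ${^*}\exists x\,\phi$ holds in ${^*U}$.

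\textbf{Main obstacle.} The delicate step is the quantifier case, specifically the choice of witnesses just described. A second point needs care: bounded variables in $\phi$ range over $U$, while in ${^*\phi}$ they range over ${^*U}$. This is exactly what makes the witness $[a(i)]$ an admissible element of ${^*U}$. The negation step is the only place where maximality of $\mathcal{F}$, through Lemma \ref{either}, is used.

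Transfer then follows by applying the theorem to the constant sequences $a(i) = c$ used to define the map ${^*}$. In that case the set $\|\phi\|$ is either $I$ or $\emptyset$, according to whether $\phi(c_1,\dots,c_n)$ holds in $U$.
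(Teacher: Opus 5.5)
Your proof is correct and follows the same route as the paper. Both argue by induction on the formula after reducing to $\lnot$, $\land$, $\exists$: the atomic case holds by definition, $\land$ follows from the filter axioms, and $\lnot$ follows from Lemma \ref{either}. You additionally spell out two steps the paper leaves implicit: the converse direction of the $\exists$ case (choosing witnesses $a(i)$ coordinatewise on a set in $\mathcal{F}$), and the ``not both'' half of the negation case.
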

\begin{proof}
We apply mathematical induction on the number of logical symbols in $\phi(x,y, \dots, z)$. If there is no logical symbols in $\phi(x,y,\cdots, z)$, $\phi(x,y,\cdots, z)$ is $x\in y$ or $x=y$ so that the conclusion follows by definiton. Using De-Morgan's law, it suffice to prove the cases  of $\exists, \land$ and $\lnot$. Let us consider the case of $\exists$. Clearly $\exists z \phi([a(i)],[b(i)], \cdots, z)$ holds in $^*U$ iff for some $[c(i)]$ $\phi([a(i)],[b(i)], \cdots, [c(i)])$ holds in $^*U$. By the hypothesis of induction, this occurs iff $\{i \in I \ |\ \phi(a(i), b(i), \cdots, c(i)) \\ {\rm \ holds \ in} \ U \} \in \mathcal{F}$. The definition of a filter yields the case of $\land$. The case of $\lnot$ comes from Lemma \ref{either}.
\end{proof}

\begin{definition}(Standard, Internal)
An element $u \in U$ and the correponding $^*u \in {^*U}$ are called standard. An element $v \in {^*U}$ is called internal.
\end{definition} 

\begin{example}(Transfer Principle I: Embedding, *-Omission, *-Pair, Exstension)
\begin{enumerate}
\item If $A \in U$ and $a \in A$, then $a \in U$ by the transitivity of $U$. Thus $^*a$ is defined and $a \in A$ iff $^*a  {\,^*\!\in} {\,^*\!A}$ by the Transfer Principle so that we usually leave out $^*$ from $^*\!\in$ for simplicity. Since the map $^*: U \mapsto \,^*U$ is injective, $A$ is embedded into $^*A$ and we often assume $A \subseteq {^*A}$.
\item Since $\forall x, y \exists !z \forall u [u \in z \to u=x \lor u=y]$ holds in $U$, 
$\forall x, y \exists !z \forall u [u \in z \to u=x \lor u=y]$ holds in $^*U$ by the Transfer Principle. Hence we can define the *-pair denoted by $^*\{x, y\}$. By defitition $z=\{x,y\}$ iff $^*z= {^*\{}{^*x}, {^*y}\}$. Similarly we can also define the *-ordered pair ${^*\!<}x, y>$ so that $z=<x,y>$ iff $^*z= {^*\!<}{^*x}, {^*y}>$. Thus $<x,y> \in f$ iff ${^*\!<}{^*x}, {^*y}> \in {^*f}$.
\item For $a, b \in \mathbb{R}$ $a<b$ iff ${^*a} {\ ^*\!\!<} {\ ^*b}$ by (2). If we assume $\mathbb{R} \subseteq {^*\mathbb{R}}$ as in (1), $^*\!\!<$ is regarded as an extension of $<$ so that $^*$ is often omitted,
\item For $A, B \in U$ and $f: A \mapsto B$, $b=f(a)$ iff $^*b= {^*f(^*a)}$ by (2). If we assume $A \subseteq {^*A}$ and $B \subseteq {^*B}$ as in (1), $^*f$ is again an extension of $f$ so that $^*$ is often omitted.
\end{enumerate}
\end{example}

\begin{example}(Concurrence Principle I)
Since $\phi(x,y,\mathbb{R}) \equiv x<y \land x \in \mathbb{R}\land y \in \mathbb{R}$ is concurrent with respect to $\mathbb{R}$, we obtain $y \in {^*\mathbb{R}}$ such that for any $x \in \mathbb{R}$ $^*x < y$. That is, $y$ is an infinite number and $1/y$ is an infinitesimal. For $x, y \in {^*\mathbb{R}}$, we write  $x \simeq y$ if $x-y$ is infinitesimal.
\end{example}

\begin{example}(Standard Part)
If $c \in {^*\mathbb{R}}$ is finite, $\sup(\{ x \in \mathbb{R} \,|\, ^*x  <  c \} )$ is called the standard part of $c$ and denoted by ${\rm st}(c)$. It is easy to check $c \simeq \,^*\!({\rm st}(c))$.
\end{example}

\begin{lemma}(Overflow Principle)
Let $^*\phi(n, x_1, x_2, \cdots, x_n)$ be an internal formula without constants and suppose that for the constants $c_1, c_2, \cdots, c_n \in {^*U}$ $^*\phi(^*n, c_1, c_2, \cdots, c_n)$ holds in $^*U$ for any $n \in \mathbb{N}$. Then for some infinite $N \in {^*\mathbb{N}}$ $^*\phi(N, c_1, c_2, \cdots, c_n)$ holds in $^*U$.
\end{lemma}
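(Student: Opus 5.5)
Consider the set $A=\{m\in{^*\mathbb{N}} \mid {^*\phi}(m,c_1,\dots,c_n) \text{ holds in } {^*U}\}$. The plan is to argue by contradiction using the fact that $\mathbb{N}$ is not internal. First, $A$ is an internal subset of ${^*\mathbb{N}}$. In the ultrapower model, each $c_j=[c_j(i)]$, and we may take $A=[A(i)]$ with
$$A(i)=\{m\in\mathbb{N}\mid \phi(m,c_1(i),\dots,c_n(i))\}\in U.$$
By \L{}o\'s's Theorem, $m\in A$ exactly when ${^*\phi}(m,c_1,\dots,c_n)$ holds. By hypothesis every standard $n\in\mathbb{N}$ (identified with ${^*n}$) lies in $A$, so $\mathbb{N}\subseteq A$.

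Suppose, for contradiction, that $A$ contains no infinite element. Then $A=\mathbb{N}$, so $\mathbb{N}$ would be internal. In $U$ the statement ``every nonempty subset of $\mathbb{N}$ which is bounded above has a maximum'' holds. By the Transfer Principle, every nonempty internal subset of ${^*\mathbb{N}}$ that is bounded above in ${^*\mathbb{N}}$ has a maximum. The set $\mathbb{N}$ is nonempty, and it is bounded above by any infinite $N\in{^*\mathbb{N}}$. Such an $N$ exists by the Concurrence Principle example; take the integer part of an infinite $y\in{^*\mathbb{R}}$, which is available by transfer of the floor function. Hence $\mathbb{N}$ would have a maximum $m$. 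But $m+1\in\mathbb{N}$, which is a contradiction.

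Therefore some infinite $N$ belongs to $A$, meaning ${^*\phi}(N,c_1,\dots,c_n)$ holds. The main obstacle is the first step: establishing rigorously that $A$ is internal, that is, an element of ${^*U}$. The paper's transfer is stated only for standard parameters, while the $c_j$ here are internal, so the natural route is to work directly with \L{}o\'s's Theorem on representatives as above, rather than with the Transfer Principle as stated. Once $A$ is shown to be internal, the remaining steps are routine.
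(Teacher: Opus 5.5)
Your argument is correct, but it is organized differently from the paper's. The paper never forms the set $A$. It transfers the least-number principle for $\lnot\phi$, with the parameters $x_1,\dots,x_n$ universally quantified inside the standard sentence. It then instantiates the transferred sentence at the internal $c_1,\dots,c_n$ to get the least $n_0\in{^*\mathbb{N}}$ at which ${^*\phi}$ fails, notes that $n_0$ must be infinite, and takes $N=n_0-1$.

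That device (quantify over the parameters before transferring) also answers what you call the main obstacle. Transferring the true sentence $\forall x_1\cdots\forall x_n\,\exists a\,\forall m\,[m\in a\leftrightarrow(m\in\mathbb{N}\land\phi(m,x_1,\dots,x_n))]$ shows that $A$ is internal using the Transfer Principle alone. So your detour through representatives and \L{}o\'s's Theorem is valid but unnecessary. You already use the same device when you transfer ``every nonempty bounded subset of $\mathbb{N}$ has a maximum'' with the set quantified.

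Your route buys a reusable lemma ($\mathbb{N}$ is not internal) and a proof by contradiction that automatically covers the case where ${^*\phi}$ holds on all of ${^*\mathbb{N}}$. The paper's displayed sentence, as written, asserts a least failure for every tuple $x_1,\dots,x_n$. It is therefore false in $U$ as soon as, for some tuple, $\phi(m,x_1,\dots,x_n)$ holds for every $m\in\mathbb{N}$. It needs the hypothesis $\exists m\in\mathbb{N}\,\lnot\phi(m,x_1,\dots,x_n)$ plus a separate trivial case.

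One small point in your write-up: going from ``$A$ has no infinite element'' to ``$A=\mathbb{N}$'' uses that every finite element of ${^*\mathbb{N}}$ is standard. This follows by transferring $\forall m\in\mathbb{N}\,(m\le k\to m=0\lor\cdots\lor m=k)$ for each standard $k$.
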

\begin{proof}
By the principle of mathematical induction,
$$\forall x_1 \cdots \forall x_n \exists n_0 \in \mathbb{N} \ [\lnot \phi(n_0, x_1, \cdots, x_n) \land [\forall n \in \mathbb{N} \ \lnot \phi(n, x_1, \cdots, x_n) \to n_0 \le n]]$$
holds in $U$. By the Transfer Principle, 
$$\forall x_1 \cdots \forall x_n \exists n_0 \in {^*\mathbb{N}} \ [\lnot \,^*\!\phi(n_0, x_1, \cdots, x_n) \land [\forall n \in {^*\mathbb{N}} \ \lnot \,^*\!\phi(n, x_1, \cdots, x_n) \to n_0 \le n]]$$
also holds in $^*U$. Hence substituting $c_1, c_2, \cdots, c_n$ for $x_1, x_2, \cdots, x_n$, 
$$\exists n_0 \in {^*\mathbb{N}} \ [\lnot \,^*\!\phi(n_0, c_1, \cdots, c_n) \land [\forall n \in {^*\mathbb{N}} \ \lnot \,^*\!\phi(n, c_1, \cdots, c_n) \to n_0 \le n]]$$
holds in $^*U$. That is, $n_0$ is the least number such that $^*\phi(n_0, c_1, \cdots, c_n)$ does not hold in $^*U$.  Since $n_0$ is infinite by hypothesis, $N = n_0-1$ has the desired property.
\end{proof}

\begin{lemma}(Convergence)\label{convergence}
Let $\{a_n\}$ be a real sequence.Then $\lim_{n \rightarrow \infty} a_n = a$ iff $^*a_N \simeq \,^*a$ for all infinite $N \in \,^*\mathbb{N}$. 
\end{lemma}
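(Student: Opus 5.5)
We prove both directions of Lemma \ref{convergence} directly from the $\varepsilon$--$N$ definition of the limit, using the Transfer Principle for one implication and the Concurrence Principle (in the form of an internal sequence that agrees with a bad subsequence) for the other. Throughout, we regard the sequence as a map $a:\mathbb{N}\to\mathbb{R}$ in $U$. Its extension $^*a: {^*\mathbb{N}}\to{^*\mathbb{R}}$ is then defined, and by the Transfer Principle, Example (4), it extends $a$.

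\emph{($\Rightarrow$)} Assume $\lim a_n = a$ and fix a standard $\varepsilon>0$. There is $n_\varepsilon\in\mathbb{N}$ with
$$\forall n\in\mathbb{N}\,[\,n\ge n_\varepsilon \to |a_n-a|<\varepsilon\,].$$
This is a formula with standard constants $a$, $\varepsilon$, $n_\varepsilon$, $\mathbb{N}$, $|\cdot|$. Transfer gives
$$\forall n\in{^*\mathbb{N}}\,[\,n\ge {^*n_\varepsilon}\to |{^*a}_n-{^*a}|<{^*\varepsilon}\,].$$
An infinite $N$ exceeds every standard $n_\varepsilon$. Hence $|{^*a}_N-{^*a}|<\varepsilon$ for every standard $\varepsilon>0$, which means ${^*a}_N\simeq{^*a}$.

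\emph{($\Leftarrow$)} We argue by contraposition. Suppose $a_n\not\to a$. Then there is a standard $\varepsilon_0>0$ such that the set $B=\{n\in\mathbb{N}\mid |a_n-a|\ge\varepsilon_0\}$ is infinite. Since $B\in U$, transfer of the sentence
$$\forall n\in B\,[\,|a_n-a|\ge\varepsilon_0\,]$$
shows that $|{^*a}_m-{^*a}|\ge\varepsilon_0$ for every $m\in{^*B}$. It therefore remains to produce an infinite element of ${^*B}$. Consider $\phi(x,y)\equiv y\in B\wedge x<y$ with $x$ ranging over $X=\mathbb{N}$. This relation is concurrent because $B$ is unbounded: any finitely many $x_i$ are all exceeded by some element of $B$. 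The Concurrence Principle then yields $N\in{^*U}$ with $N\in{^*B}$ and $N>{^*x}$ for all $x\in\mathbb{N}$. So $N$ is infinite, and ${^*a}_N\not\simeq{^*a}$, contradicting the hypothesis.

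The only delicate step is this last one: one must make sure that the concurrence element actually lies in ${^*B}$ and is infinite. The formula $\phi(x,y)$ above is chosen to package both requirements into a single concurrent relation, so that one application of the Concurrence Principle delivers them together. Everything else is routine transfer of first-order statements with standard parameters.
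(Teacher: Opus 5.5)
Your proof is correct. The forward direction is the same as the paper's; the converse follows a genuinely different route.

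For the converse, the paper argues directly. Fix a standard $\epsilon>0$ and take any infinite $n_0$. Every $n>n_0$ is then infinite, so the internal sentence $\exists n_0\in{}^*\mathbb{N}\ \forall n\in{}^*\mathbb{N}\,(n>n_0\to|{}^*a_n-{}^*a|<{}^*\epsilon)$ holds. Transfer read downward (from ${}^*U$ to $U$) then yields a standard $n_0$, which is exactly the $\epsilon$--$N$ condition.

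You instead argue by contraposition. You isolate the standard set $B$ of bad indices and transfer the inequality $|a_n-a|\ge\varepsilon_0$ upward to all of ${}^*B$. You then use the Concurrence Principle with $y\in B\wedge x<y$ to obtain an infinite element of ${}^*B$.

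What each buys:
\begin{itemize}
\item The paper's version is shorter. It needs only the existence of one infinite hypernatural, which it uses tacitly, together with the downward half of transfer.
\item Your version uses transfer only upward, but needs a concurrence argument tailored to $B$: the fact that the extension of an infinite subset of $\mathbb{N}$ contains infinite elements.
\item In exchange, your version explicitly exhibits an infinite index $N$ at which ${}^*a_N$ stays a standard distance $\varepsilon_0$ away from ${}^*a$.
\end{itemize}
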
 
\begin{proof}
Suupose that $\lim_{n \rightarrow \infty} a_n = a$. Then by definition, for any  
$\epsilon \in \mathbb{R}_{>0}$ there exists a natural number $n_0 \in \mathbb{N}$ such that  $\forall n \in \mathbb{N} (n>n_0 \to |a_n-a|<\epsilon)$ holds. By the Transfer Principle, $\forall n \in \, {^*\mathbb{N}} (n>{^*n_0} \to |\,^*a_n-\,^*a|<\,^*\epsilon)$ holds. Thus for all infinite $N$, obviously $N \ge{^*n_0}$ so that $|\,^*a_N - \,^*a| < \,^*\epsilon$. Since $\epsilon \in \mathbb{R}_{>0}$ is arbitary, $^*a_N \simeq \,^*a$. Conversely suppose that $^*a_N \simeq \,^*a$ for any inifinte $N$.  Then for any $\epsilon \in \mathbb{R}_{>0}$ $|\,^*a_N-\,^*a|<\,^*\epsilon$. Thus cleary for some (actually any) inifinite $n_0$, $n>n_0$ implies $|\,^*a_n-\,^*a|<\,^*\epsilon$. That is, $\exists n_0 \in \,^*\mathbb{N} \ \forall n \in \,^*\mathbb{N} \ (n>n_0 \to |\,^*a_n-\,^*a| <\,^*\epsilon)$ holds. By the Transfer Principle, $\exists n_0 \in \mathbb{N} \ \forall n \in \mathbb{N}\ (n > n_0 \to |a_n-a| <\epsilon)$ holds. Since $\epsilon \in \mathbb{R}_{>0}$ is arbitary, we have the conclusion.
\end{proof}

\begin{definition}(Transfer Principle II: *-Property, *-Finitenss, *-Finite Sum)
\begin{enumerate}
\item Let $P (\in U)$ define some property $Prop$. We say $u$ is $Prop$ if $u \in P$. In this situation, we say $v$ is *-$Prop$ if $v \in \,^*P$. An example is the following.  For $A \in U$, if $P \equiv \mathcal{P}_F(A)$ (the set of all the finite subsets of $A$), then $u (\in P)$ is a finite subset of $A$ and $v (\in \,^*P)$ is a *-finite subset of $^*A$.
\item Denote the finite sum of the elements of a finite subset of $\mathbb{R}$ by $\Sigma: \mathcal{P}_F(\mathbb{R}) \mapsto \mathbb{R}$. Then, we obtain the *-finite sum $^*\Sigma: \,^*\mathcal{P}_F(\mathbb{R}) \mapsto \ ^*\mathbb{R}$. That is, *-finite sum is defined on all the *-finite subset of $^*\mathbb{R}$.
\end{enumerate}
\end{definition}
 
\begin{example}(Concurrence  Principle II)
$\phi(x,y,\mathcal{P}_F([0,1]) ) \equiv x \in y \land y \in \mathcal{P}_F([0,1])$ is concurrent with respect to $[0,1]$. Hence there exists an element $y \in \,^*\mathcal{P}_F([0,1])$ such that $^*x \in y$ for all $x \in [0,1]$. In other words, there exists a *-finite subset of $^*[0,1]$ that contains all the elements of $[0,1]$, if we assume $[0,1] \subseteq \,^*[0,1]$.
\end{example}

\begin{lemma}(Uniform Continuity)\label{uniformcontinuity} 
Let $f(x)$ be a function on $[0,1]$. Then  $f(x)$ is uniformly contiuous on $[0,1]$ iff $\forall x,y \,^*\!\in {^*[0,1]} \  (x \simeq y \to {^*f(x)} \simeq {^*f(y)})$ holds.
\end{lemma}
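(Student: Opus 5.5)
We prove the two directions separately, in the same style as the proof of Lemma \ref{convergence}: translate the $\epsilon$--$\delta$ definition by the Transfer Principle, and for the converse pick an infinitesimal $\delta$ and transfer back an existential statement.

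\emph{Uniform continuity implies the infinitesimal condition.} Assume $f$ is uniformly continuous on $[0,1]$ and fix a standard $\epsilon \in \mathbb{R}_{>0}$. There is $\delta \in \mathbb{R}_{>0}$ such that
$$\forall x, y \in [0,1]\ (|x-y|<\delta \to |f(x)-f(y)|<\epsilon)$$
holds in $U$. By the Transfer Principle, the same statement holds in ${^*U}$ with ${^*[0,1]}$, ${^*f}$, ${^*\delta}$, ${^*\epsilon}$. Now take $x,y \in {^*[0,1]}$ with $x \simeq y$. Then $|x-y| < {^*\delta}$, since $\delta$ is a standard positive real and $x-y$ is infinitesimal. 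Hence $|{^*f(x)}-{^*f(y)}| < {^*\epsilon}$. Because $\epsilon$ was an arbitrary standard positive real, ${^*f(x)} \simeq {^*f(y)}$.

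\emph{The infinitesimal condition implies uniform continuity.} Assume the infinitesimal condition and fix a standard $\epsilon>0$. By Concurrence Principle I there is an infinitesimal $\delta_0>0$ (take $\delta_0 = 1/y$ for an infinite $y$). If $x,y\in{^*[0,1]}$ and $|x-y|<\delta_0$, then $x\simeq y$, so the hypothesis gives $|{^*f(x)}-{^*f(y)}| \simeq 0 < {^*\epsilon}$. Therefore the internal sentence
$$\exists \delta \in {^*\mathbb{R}}_{>0}\ \forall x,y \in {^*[0,1]}\ (|x-y|<\delta \to |{^*f(x)}-{^*f(y)}|<{^*\epsilon})$$
holds in ${^*U}$, witnessed by $\delta_0$. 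By the Transfer Principle, read in the reverse direction as in Lemma \ref{convergence}, the corresponding standard sentence holds in $U$: there is a real $\delta>0$ that works for $\epsilon$. Since $\epsilon$ was arbitrary, $f$ is uniformly continuous.

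The main point to handle carefully is the converse direction. One has to check that the sentence being transferred mentions only standard constants, namely ${^*f}$, ${^*[0,1]}$ and ${^*\epsilon}$. The infinitesimal $\delta_0$ appears only as a witness to the existential quantifier, and the Transfer Principle guarantees a standard witness only because the statement is formulated this way. Arguing instead by contradiction is possible but no easier: from a failure of uniform continuity one would produce sequences $x_n,y_n$ with $|x_n-y_n|<1/n$ and $|f(x_n)-f(y_n)|\ge\epsilon$, then pass to an infinite index, which also requires Transfer or Overflow.
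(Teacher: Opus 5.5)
Your proof is correct and is essentially the paper's own argument. The paper only says that the proof mirrors Lemma \ref{convergence}: transfer the $\epsilon$--$\delta$ statement forward for one direction, and for the converse use a nonstandard witness (your infinitesimal $\delta_0$, in place of the infinite $n_0$ there) and transfer the existential statement back to obtain a standard $\delta$.
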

\begin{proof}
The proof is very similar to that of Thoerm \ref{convergence}.
\end{proof}

\begin{definition}(Good *-Partition of $[0,1]$) 
$p \equiv \{0=a_0<a_1<\cdots<a_i<\cdots<a_n =1\} \ (a_i \in \mathbb{R},n \in \mathbb{N})$ is a partition of $[0,1]$. By the Transfer Principle, $P\equiv \{0=a_0<a_1<\cdots<a_i<\cdots<a_N =1\} \ (a_i \in {^*\mathbb{R}}, N \in {^*\mathbb{N}})$
 is a *-partition of $^*[0,1]$. $P$ is called "good", in case for any $x \in [0,1]$, $^*x \in P$. The term "good" is used only in the next example. 
\end{definition}

\begin{example}(Riemann-Stieltjes Integral)
Let $f(x): [0,1] \mapsto \mathbb{R}$ be a continuous function and $g(x): [0,1] \mapsto \mathbb{R}$ be a non-decreasing function. For $p \equiv \{0=a_0<a_1<\cdots<a_n =1\}$ (a partition of $[0,1]$), set $ S(p) \equiv \sum_{k=1}^n f(a_{k-1})(g(a_k)-g(a_{k-1}))$. $S$ is a function from the set of all the partition of [0,1] $I$ to $\mathbb{R}$ so that by the Transfer Principle, $^*S: {^*I} \mapsto {^*\mathbb{R}}$. In other words, for a *-partition of $^*[0,1], P \equiv \{0=a_0 < a_1< \cdots <a_N=1\}$ $^*S(P) = {^*\sum}_{k=1}^N f(a_{k-1})(g(a_k) -g(a_{k-1}))$, where $N \in {^*\mathbb{N}}$ and $^*\sum$ is the *-finite sum. Note that *'s are omitted from $^*f, ^*g$ and $^*-$. Suppose that $P$ and $P'$ are "good" *-partitions of $^*[0,1]$. Then $^*S(P) \simeq {^*S(P')}$. To see this, let $P''$ be the combined *-partition of $P$ and $P'$. Then, it is a straightfoward to verify that $S(P) \simeq S(P'')$ and $S(P') \simeq S(P'')$ by using Lemma \ref{uniformcontinuity}.  
\end{example}

\end{document}